\documentclass[12pt,reqno, english]{amsart}
\usepackage[T1]{fontenc}
\usepackage[latin9]{inputenc}
\usepackage[a4paper]{geometry}
\geometry{verbose,tmargin=3cm,bmargin=3cm,lmargin=2.5cm,rmargin=2.5cm}
\usepackage{amsthm}
\usepackage{amssymb}

\makeatletter
\numberwithin{equation}{section}
\numberwithin{figure}{section}

\newtheorem{theorem}{Theorem}[section]
\newtheorem{remark}{Remark}[section]
\newtheorem{corollary}{Corollary}[section]

\usepackage{color}
\usepackage{hyperref}
\hypersetup{
   colorlinks = true,%
   citecolor = blue,
   filecolor=red,%
   linkcolor = [rgb]{0.65,0.0,0.0},%
   anchorcolor = red,
   pagecolor = red,
   urlcolor= [rgb]{0.65,0.0,0.0}
   linktocpage=true,
   pdfpagelabels=true,
   bookmarksnumbered=true,
}

\makeatother

\usepackage{babel}

\title[Compact manifolds vs. Sobolev-type inequalities]
{Topological rigidity of compact manifolds supporting Sobolev-type inequalities}

\keywords{Riemannian geometry, compact manifold, rigidity,  Sobolev inequality}
\subjclass[2010]{Primary: 58J05, 53C21, 53C24; Secondary: 46E35.}

\author{Csaba Farkas}
\address{Department of Mathematics and Informatics, Sapientia University, Tg. Mure\c s, Romania 
}
\email{farkas.csaba2008@gmail.com; farkascs@ms.sapientia.ro}

\author{Alexandru Krist\'aly}
\address{\noindent Department of Economics, Babe\c s-Bolyai University, Cluj-Napoca, Romania, \newline \noindent Institute of Applied Mathematics,
	\'Obuda University, 1034 Budapest, Hungary}
\email{kristaly.alexandru@nik.uni-obuda.hu; alex.kristaly@econ.ubbcluj.ro}

\author{\'Agnes Mester}
\address{Institute of Applied Mathematics,
	\'Obuda University, 1034 Budapest, Hungary}
\email{mester.agnes@yahoo.com}

\begin{document}
\maketitle

\begin{abstract}
	Let $(M,g)$ be an $n$-dimensional $(n\geq 3)$ compact Riemannian manifold  with Ric$_{(M,g)}\geq (n-1)g$. If $(M,g)$ supports an  AB-type critical Sobolev inequality with Sobolev constants close to the optimal ones corresponding to the standard unit sphere $(\mathbb S^n,g_0)$, we prove that $(M,g)$ is  topologically close to $(\mathbb S^n,g_0)$. Moreover, the Sobolev constants on $(M,g)$ are precisely the optimal constants on the sphere $(\mathbb S^n,g_0)$ if and only if $(M,g)$ is isometric to $(\mathbb S^n,g_0)$; in particular, the latter result answers a question of V.H. Nguyen.  
\end{abstract}

\section{Introduction}

Let $(M,g)$ be a smooth compact $n$-dimensional Riemannian manifold, $n\geq 3.$ The general theory of Sobolev inequalities shows that there exist $A>0$ and $B>0$ such that 
\begin{equation}\label{AB}
 \left(\int_{M}|u|^{\frac{2n}{n-2}}{\rm d}v_{g}\right)^{\frac{n-2}{n}}\leq  A\int_{M}|\nabla_{g}u|^{2}{\rm d}v_{g}+B\int_{M}u^{2}{\rm d}v_{g,}\ \forall u\in H_1^2(M).
\end{equation}
In fact, problem (\ref{AB}) is a part of the famous AB-program initiated by Aubin \cite{Aubin} concerning the optimality of the constants $A$ and $B$; for a systematic presentation of this topic, see the monograph of Hebey \cite[Chapters 4 \& 5]{Hebey}.  
In particular, one can prove the existence of $B>0$ such that (\ref{AB}) holds with $A=A_0=\frac{4}{n(n-2)}\omega_{n}^{-\frac{2}{n}}$, cf. \cite[Theorem 4.6]{Hebey}, the latter value being the optimal Talenti constant in the Sobolev embedding $H_1^2{(\mathbb R^n)}\hookrightarrow L^{2^*}(\mathbb R^n),$ $n\geq 3,$ where $2^*=2n/(n-2).$ Hereafter, $\omega_n={\rm Vol}_{g_0}(\mathbb S^n)$ denotes the volume of the standard unit sphere $(\mathbb S^n,g_0)$.  If $u\equiv 1$ in (\ref{AB}), then we have $B\geq {\rm Vol}_{g}(M)^{-\frac{2}{n}}$, where Vol$_g(S)$ denotes the volume of $S\subset M$ in $(M,g)$. Moreover, if $n\geq 4$ then the validity of (\ref{AB}) with $A=A_0=\frac{4}{n(n-2)}\omega_{n}^{-\frac{2}{n}}$ implies  $$B\geq \frac{1}{n(n-1)}\omega_{n}^{-\frac{2}{n}}\max_M {\rm Scal}_{(M,g)},$$ where ${\rm Scal}_{(M,g)}$ is the scalar curvature of $(M,g),$ cf. \cite[Proposition 5.1]{Hebey}. 

In the model case when $(M,g)=(\mathbb S^n,g_0)$  is the standard unit sphere of $\mathbb R^{n+1}$, Aubin \cite{Aubin}  proved that the optimal values of $A$ and $B$ in (\ref{AB}) are 
\begin{equation}\label{AnullBnull}
A_0=\frac{4}{n(n-2)}\omega_{n}^{-\frac{2}{n}} \ {\rm and}\ B_0=\omega_{n}^{-\frac{2}{n}},
\end{equation}
respectively; 
moreover, for every $\lambda>1$, the  function  $u_\lambda(x)=(\lambda-\cos d_0(x))^{1-\frac{2}{n}}$, $x\in \mathbb S^n,$ is extremal in (\ref{AB}), see also \cite[Theorem 5.1]{Hebey}. Hereafter,  $d_0(x)=d_{\mathbb S^n}(y_{0},x)$, $x\in \mathbb S^n,$ where $d_{\mathbb S^n}$ denotes the standard metric on $(\mathbb S^n,g_0)$ and the element $y_0\in \mathbb S^n$ is arbitrarily fixed. 
Note however that on the quotients $M=\mathbb S^1(r)\times \mathbb S^2$  of $\mathbb S^{3}$ endowed with its natural metric $g$ (with $r>0$ sufficiently small) inequality (\ref{AB}) is not valid for $A=A_0$ and $B={\rm Vol}_{g}(M)^{-\frac{2}{n}},$ see \cite[Proposition 5.7]{Hebey}.

Let $B_M(x,\rho)$ and $B_{\mathbb S^n}(y,\rho)$ be the open geodesic balls with radius $\rho>0$ and centers in $x\in M$ and $y\in \mathbb S^n$ in  $(M,g)$ and $(\mathbb{S}^{n},g_0)$, respectively.  

Our main result reads as follows:

\begin{theorem}\label{fotetel}
Let $(M,g)$ be an $n$-dimensional $(n\geq 3)$ compact Riemannian manifold with Ricci curvature 
$\mathrm{Ric}_{(M,g)}\geq(n-1)g$ and assume  that the Sobolev inequality $(\ref{AB})$ holds on $(M,g)$ with some constants $A,B>0$.  Then the following assertions hold: 
\begin{itemize}
	\item[(i)] $A\geq A_0$ and $B\geq B_0$, where $A_0$ and $B_0$ are  from $(\ref{AnullBnull});$
	\item[(ii)] there exists $x_0\in M$ such that for every $y_0\in \mathbb S^n$ and $\rho\in [0,\pi]$,  
	\begin{equation}\label{foegyenlotlenseg}	
	{\rm Vol}_g(B_M(x_0,\rho))\geq \min\left\{\frac{A_0}{A},\frac{B_0}{B}\right\}^\frac{n}{2} 
	{\rm Vol}_{g_0}(B_{\mathbb S^n}(y_0,\rho)).
	\end{equation}
\end{itemize}
\end{theorem}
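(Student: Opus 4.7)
My plan is to deduce both parts of the theorem from two classical tools that apply directly in this setting: an Aubin-type concentration analysis to extract the sharp local Euclidean Sobolev constant from (\ref{AB}), and the Bishop--Gromov volume comparison theorem together with Bonnet--Myers (both valid under $\mathrm{Ric}_{(M,g)}\geq (n-1)g$).

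For the inequality $A \geq A_0$ in part (i), I would run the standard concentration procedure. Fix an arbitrary $p\in M$, let $v\in C_c^\infty(\mathbb R^n)$ be a Talenti extremal of the sharp Euclidean Sobolev inequality, and transplant the rescaled family $v_\varepsilon(y) = \varepsilon^{-(n-2)/2}v(\exp_p^{-1}(y)/\varepsilon)$ (cut off to a small geodesic ball around $p$) onto $M$. Expanding the metric in normal coordinates at $p$ one checks that, as $\varepsilon\to 0^+$, the quantities $\int_M v_\varepsilon^{2^*}\,dv_g$ and $\int_M|\nabla_g v_\varepsilon|^2\,dv_g$ converge to the corresponding Euclidean integrals of $v$, whereas $\int_M v_\varepsilon^2\,dv_g = O(\varepsilon^2)$. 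Passing to the limit in (\ref{AB}) reproduces the sharp Euclidean Sobolev inequality with constant $A$, forcing $A\geq A_0$. For $B \geq B_0$, I would simply test (\ref{AB}) against $u\equiv 1$ to get $B\geq \mathrm{Vol}_g(M)^{-2/n}$, then invoke the global form of Bishop--Gromov, $\mathrm{Vol}_g(M)\leq \omega_n$, to conclude $B\geq \omega_n^{-2/n}=B_0$.

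Part (ii) should then follow from the same two ingredients, and in fact hold at \emph{every} $x_0\in M$ with the stronger constant $(B_0/B)^{n/2}$. Indeed, the monotonicity part of Bishop--Gromov asserts that
\[
\rho\mapsto \frac{\mathrm{Vol}_g(B_M(x_0,\rho))}{\mathrm{Vol}_{g_0}(B_{\mathbb S^n}(y_0,\rho))}
\]
is non-increasing on $(0,\pi]$. Evaluating this at $\rho=\pi$ and using Bonnet--Myers (which gives $\mathrm{diam}(M)\leq \pi$, hence $B_M(x_0,\pi)=M$) yields, for every $\rho\in[0,\pi]$,
\[
\mathrm{Vol}_g(B_M(x_0,\rho)) \geq \frac{\mathrm{Vol}_g(M)}{\omega_n}\,\mathrm{Vol}_{g_0}(B_{\mathbb S^n}(y_0,\rho)).
\]
Combining with $\mathrm{Vol}_g(M)\geq B^{-n/2}$ from the constant-function test and $\omega_n=B_0^{-n/2}$ gives the bound with constant $(B_0/B)^{n/2}$, which is automatically at least $\min(A_0/A,B_0/B)^{n/2}$, proving (\ref{foegyenlotlenseg}).

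In my view the only genuinely delicate step is the concentration argument for $A\geq A_0$, where the lower-order metric corrections in normal coordinates must be tracked carefully; these computations are however standard and documented in \cite{Hebey}. By contrast, (ii) is a short bookkeeping exercise once Bishop--Gromov is in hand. The mild surprise is that in my plan the constant $A$ plays no role in (ii); this hints that the authors may have a refined proof via the Aubin extremals $u_\lambda(x)=(\lambda-\cos d_M(x_0,x))^{1-2/n}$ on $M$, comparing the resulting radial integrals to the equality case on $(\mathbb S^n,g_0)$, which would produce the symmetric ``$\min$'' form of the bound naturally.
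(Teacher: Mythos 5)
Your part (i) agrees with the paper: the bound $A \geq A_0$ is obtained by the blow-up/concentration argument you describe (which the paper delegates to Hebey, Proposition 4.2), and $B \geq B_0$ is derived exactly as you do, via the constant-function test combined with the global form of Bishop--Gromov.

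Your part (ii) is correct and, notably, proves a \emph{stronger} statement than the one the paper records. The paper's own route is the refined one you conjectured: it substitutes the Aubin-type radial trial functions $w_\lambda = (\lambda - \cos d_g(x_0,\cdot))^{1-n/2}$ into $(\ref{AB})$, converts this into a first-order differential inequality for $I(\lambda) = f(\lambda) - \lambda f'(\lambda)$ with $f(\lambda) = \int_M (\lambda - \cos d_g)^{2-n}\,\mathrm{d}v_g$, compares it against the corresponding ODE satisfied by the spherical quantity $H$ via a monotone comparison functional $\varphi_\lambda$, checks the boundary condition $I \geq H$ near $\lambda = 1^+$ through local Euclidean volume asymptotics, and finally sends $\lambda \to \infty$ to extract $\mathrm{Vol}_g(M) \geq \min\{A_0/A, B_0/B\}^{n/2}\,\omega_n$, after which Bishop--Gromov localizes the bound. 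Your shortcut bypasses all of this: the constant-function test already yields $\mathrm{Vol}_g(M) \geq B^{-n/2} = (B_0/B)^{n/2}\,\omega_n$, and since $A \geq A_0$ and $B \geq B_0$ imply $(B_0/B)^{n/2} \geq \min\{A_0/A, B_0/B\}^{n/2}$, your constant dominates the paper's, holds at \emph{every} $x_0 \in M$ (not merely some $x_0$), and renders $A$ irrelevant to (ii). It also upgrades the rigidity in Corollary~\ref{kovetkezmeny}: $B = B_0$ alone forces $\mathrm{Vol}_g(M) = \omega_n$ and hence isometry with $(\mathbb{S}^n, g_0)$, with no hypothesis on $A$ beyond $A \geq A_0$. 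What the paper's ODE-versus-ODI machinery buys is a technique that does not lean on the constant function at all and treats $A$ and $B$ symmetrically, which could be useful for Sobolev-type inequalities where testing against $u \equiv 1$ is vacuous; but for the theorem as stated, your argument is both shorter and sharper.
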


\begin{remark}\rm \label{remarkelso} Note that (\ref{foegyenlotlenseg}) is valid on the whole $[0,\infty)$. Indeed, since the Ricci curvature on $(M,g)$ verifies Ric$_{(M,g)}\geq (n-1)g$, due to Bonnet-Myers theorem, the diameter $D_M:={\rm diam}(M)$  of $M$  is bounded  from above by $\pi$; accordingly, for every $\rho\geq \pi$ one has  $B_M({x_0},\rho)=M$ and $B_{\mathbb S^n}({y_0},\rho)=\mathbb S^n$, thus  (\ref{foegyenlotlenseg}) can be extended beyond $\pi$.
\end{remark}

 Perelman \cite{Perelman} states that for every $n\geq 2$ there exists $\delta_n\in [0,1)$ such that if the $n$-dimensional compact Riemannian manifold $(M,g)$ with Ricci curvature $\mathrm{Ric}_{(M,g)}\geq(n-1)g$ verifies ${\rm Vol}_g(M)\geq (1-\delta_n){\rm Vol}_{g_0}(\mathbb S^n),$ then 
$M$ is homeomorphic to $\mathbb S^n$; this result has been improved by Cheeger and Colding \cite[Theorem A.1.10]{CC-97} by replacing homeomorphic to diffeomorphic. The latter result, the equality case in Bishop-Gromov inequality and Theorem \ref{fotetel} imply the following topological rigidity for compact manifolds: 

\begin{corollary}\label{kovetkezmeny} Under the same assumptions as in Theorem {\rm \ref{fotetel}}, if 
	$$\max\left\{\frac{A}{A_0},\frac{B}{B_0}\right\}\leq (1-\delta_n)^{-\frac{2}{n}},$$
then $(M,g)$ is diffeomorphic to $(\mathbb S^n,g_0)$. Moreover, $A=A_0$ and $B=B_0$ if and only if  $(M,g)$ is isometric to $(\mathbb{S}^{n},g_0)$. 
	\end{corollary}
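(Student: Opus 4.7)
The plan is to derive the corollary as an essentially direct consequence of Theorem~\ref{fotetel}(ii), combined with two classical ingredients: the Cheeger--Colding diffeomorphism stability theorem (refining Perelman's topological stability) for the diffeomorphism claim, and the rigidity case of the Bishop--Gromov volume comparison for the isometric rigidity.

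For the first assertion, I would first use Remark~\ref{remarkelso} to extend inequality~(\ref{foegyenlotlenseg}) to all $\rho\geq 0$, and apply it with any $\rho\geq \pi$, so that both geodesic balls exhaust their ambient manifolds. This produces
\[
{\rm Vol}_g(M)\,\geq\, \min\left\{\frac{A_0}{A},\frac{B_0}{B}\right\}^{n/2}\omega_n \,=\, \left(\max\left\{\frac{A}{A_0},\frac{B}{B_0}\right\}\right)^{-n/2}\omega_n.
\]
Plugging in the standing hypothesis $\max\{A/A_0,B/B_0\}\leq(1-\delta_n)^{-2/n}$ then yields the volume pinching ${\rm Vol}_g(M)\geq(1-\delta_n)\omega_n$, upon which the Cheeger--Colding refinement of Perelman's result (quoted in the excerpt as \cite[Theorem A.1.10]{CC-97}) immediately concludes that $(M,g)$ is diffeomorphic to $(\mathbb{S}^n,g_0)$.

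For the equality statement, the ``if'' direction is immediate from Aubin's computation of the optimal Sobolev constants on the round sphere, recalled in~(\ref{AnullBnull}). For the ``only if'' direction, assume $A=A_0$ and $B=B_0$. Then Theorem~\ref{fotetel}(ii), extended via Remark~\ref{remarkelso}, gives
\[
{\rm Vol}_g(B_M(x_0,\rho))\,\geq\, {\rm Vol}_{g_0}(B_{\mathbb{S}^n}(y_0,\rho)) \quad \text{for every } \rho\geq 0.
\]
Under $\mathrm{Ric}_{(M,g)}\geq(n-1)g$, the Bishop--Gromov comparison theorem provides the reverse inequality for every $\rho$, so equality holds in Bishop--Gromov along the entire range of radii, and its rigidity case forces $(M,g)$ to be isometric to $(\mathbb{S}^n,g_0)$.

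Since each step is either already recorded in the excerpt or a straight invocation of a classical theorem, I do not foresee any serious obstacle; the only mildly delicate point is to invoke the equality case of Bishop--Gromov in its \emph{global} form (equality for all $\rho\in[0,\pi]$, rather than at a single radius) to obtain the full isometric rigidity, but this is the standard formulation.
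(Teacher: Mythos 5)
Your proposal is correct and follows essentially the same route as the paper: both use the quantitative volume estimate of Theorem~\ref{fotetel}(ii) together with the Cheeger--Colding stability theorem for the diffeomorphism statement, and then Aubin's optimality plus the rigidity case of Bishop--Gromov for the isometry equivalence. The remark at the end about needing the ``global'' equality case of Bishop--Gromov is not really a delicate point: once $\mathrm{Vol}_g(M)=\omega_n$, the monotonicity of $\rho\mapsto \mathrm{Vol}_g(B_M(x_0,\rho))/\mathrm{Vol}_{g_0}(B_{\mathbb{S}^n}(y_0,\rho))$ (pinched between its limit $1$ at $\rho\to 0^+$ and its value $1$ at $\rho=\pi$) automatically yields equality for all radii, which is exactly what the paper records.
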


\begin{remark}\rm 
 The statement of Corollary \ref{kovetkezmeny} is in the spirit of the results of Ledoux \cite{Ledoux} and do Carmo and Xia \cite{doC-Xia}. In these works certain Sobolev inequalities are considered on {\it non}-compact Riemannian manifolds with non-negative Ricci curvature, and the Riemannian manifold is isometric to the Euclidean space with the same dimension if and only if the Sobolev constants are precisely the Euclidean optimal constants.  Further results in this direction can be found in the papers by   Krist\'aly \cite{Kristaly-Calc-V, Kristaly-Potential} and Krist\'aly and Ohta \cite{Kristaly-Ohta}.   Theorem \ref{fotetel} and Corollary \ref{kovetkezmeny} seem to be the first contributions within this topic in the setting of compact Riemannian manifolds, answering also a question of Nguyen \cite{Nguyen}. 
 
\end{remark}

\section{Proofs}

\begin{proof}[Proof of Theorem \ref{fotetel}]

(i) The validity of the Sobolev inequality $(\ref{AB})$ on $(M,g)$ and a similar argument as in Hebey \cite[Proposition 4.2]{Hebey}  imply that $A\geq A_0$. 

By Remark \ref{remarkelso}, we have $D_M:=$diam$(M)\leq \pi$.
Since  Ric$_{(M,g)}\geq (n-1)g$, by the Bishop-Gromov comparison principle we have that for every $x_0\in M$ and $y_0\in \mathbb S^n$, the function  $\rho\mapsto \frac{{\rm Vol}_g(B_M({x_0},\rho))}{{\rm Vol}_{g_0}(B_{\mathbb S^n}({y_0},\rho))}$ is non-increasing on $(0,\infty)$; in particular, 
we have 
\begin{equation}\label{Bishop-Gromov}
1\geq \frac{{\rm Vol}_g(B_M({x_0},\rho))}{{\rm Vol}_{g_0}(B_{\mathbb S^n}({y_0},\rho))}\geq \frac{{\rm Vol}_g(B_M({x_0},\pi))}{{\rm Vol}_{g_0}(B_{\mathbb S^n}({y_0},\pi))}=\frac{{\rm Vol}_g(M)}{{\rm Vol}_{g_0}(\mathbb S^n)},\ \forall \rho\in [0,\pi].
\end{equation} 
Now, choosing $u\equiv 1$ in (\ref{AB}), it follows that 
$$B\geq {\rm Vol}_g(M)^{-\frac{2}{n}}\geq {\rm Vol}_{g_0}(\mathbb S^n)^{-\frac{2}{n}}=\omega_n^{-\frac{2}{n}}=B_0.$$

(ii) If $D_M= \pi$, we have nothing to prove. Indeed, in this case  $(M,g)$ is isometric to $(\mathbb S^n,g_0)$, see Cheng \cite{Cheng} and Shiohama \cite{Shiohama}, i.e.,  ${\rm Vol}_g(M)={\rm Vol}_{g_0}(\mathbb S^n)$ and (\ref{Bishop-Gromov}) implies at once relation (\ref{foegyenlotlenseg}). 

Accordingly, we  assume that $D_M<\pi.$ Fix $x_0,\tilde x_0\in M$ such that $d_g(x_0,\tilde x_0)=D_M,$ and $y_0\in \mathbb S^n$. Let ${\rm d}v_g$ and ${\rm d}v_{g_0}$ be the canonical volume forms on $(M,g)$ and  $(\mathbb S^n,g_0)$, respectively. 
Let $f,s:(1,\infty)\to \mathbb R$ be the functions defined as   
\begin{equation}\label{ket-fuggveny}
f(\lambda)={\displaystyle \int_{M}(\lambda-\cos d_g)^{2-n}{\rm d}v_{g}}\ \ {\rm and}\ \  s(\lambda)={\displaystyle \int_{\mathbb{S}^{n}}(\lambda-\cos d_0)^{2-n}{\rm d}v_{\mathbb{S}^{n}},}\ \ \lambda>1,
\end{equation}
where $d_g=d_{g}(x_{0},\cdot)$ ans $d_0=d_{\mathbb S^n}(y_{0},\cdot)$. It is easily seen that both functions $f$  and $s$ are well-defined and smooth on $(1,\infty)$. 
 
 The proof will be provided in several steps. 
 
\noindent  \textbf{Step 1} (local behavior of $f$ and $s$ around $1$). We claim that
\begin{equation}\label{local-hatarertek}
\liminf_{\lambda\to1^{+}}\frac{f(\lambda)-\lambda f'(\lambda)}{s(\lambda)-\lambda s'(\lambda)}\geq 1.
\end{equation}
By the layer cake representation of functions and a change of variables, we have that 
\begin{eqnarray*}
I(\lambda)&:=& f(\lambda)-\lambda f'(\lambda)\\
&=&\int_M(\lambda-\cos d_g)^{1-n}((n-1)\lambda-\cos d_g){\rm d}v_{g}\\&=&
\int_0^\infty{\rm Vol}_g(\{x\in M:(\lambda-\cos d_g)^{1-n}((n-1)\lambda-\cos d_g)>t\}){\rm d}t
\\&=&
(n-2)\int_0^{D_M}{\rm Vol}_g(B_M({x_0},\rho))(\lambda-\cos \rho)^{-n}(n\lambda-\cos \rho)\sin \rho {\rm d}\rho\\ && + {\rm Vol}_g(M)(\lambda-\cos D_M)^{1-n}((n-1)\lambda-\cos D_M).
\end{eqnarray*}
In a similar manner, we have 
\begin{eqnarray*}
	J(\lambda)&:=& s(\lambda)-\lambda s'(\lambda)
	\\&=&
	(n-2)\int_0^{\pi}{\rm Vol}_{g_0}(B_{\mathbb S^n}({y_0},\rho))(\lambda-\cos \rho)^{-n}(n\lambda-\cos \rho)\sin \rho {\rm d}\rho\\ && + {\rm Vol}_{g_0}(\mathbb S^n)(\lambda+1)^{1-n}((n-1)\lambda+1).
\end{eqnarray*}
Fix $\varepsilon>0$ arbitrarily. Then the local behavior of the geodesic balls both on $(M,g)$ and $(\mathbb S^n,g_0)$ implies that there exits $\delta=\delta_\varepsilon>0$ sufficiently small such that for every $\rho\in (0,\delta)$, 
$${\rm Vol}_g(B_M({x_0},\rho))\geq (1-\varepsilon)\tilde \omega_n \rho^n
$$
and 
$${\rm Vol}_{g_0}(B_{\mathbb S^n}({y_0},\rho))\leq (1+\varepsilon)\tilde \omega_n \rho^n,$$
where $\tilde \omega_n$ denotes the volume of the $n$-dimensional unit ball in $\mathbb R^n$. 
Therefore, the above estimates give that  
\begin{equation}\label{IperJ}
\frac{I(\lambda)}{J(\lambda)}\geq \frac{(1-\varepsilon)(n-2)\tilde \omega_n\displaystyle\int_0^\delta (\lambda-\cos \rho)^{-n}(n\lambda-\cos \rho)\rho^n\sin\rho {\rm d}\rho}{(1+\varepsilon)(n-2)\tilde \omega_n\displaystyle\int_0^\delta (\lambda-\cos \rho)^{-n}(n\lambda-\cos \rho)\rho^n\sin\rho {\rm d}\rho +\tilde s(\lambda,\delta,n)},
\end{equation}
where 
\begin{eqnarray*}
\tilde s(\lambda,\delta,n)&=&(n-2)\int_\delta^{\pi}{\rm Vol}_{g_0}(B_{\mathbb S^n}({y_0},\rho))(\lambda-\cos \rho)^{-n}(n\lambda-\cos \rho)\sin \rho {\rm d}\rho \\&&+ {\rm Vol}_{g_0}(\mathbb S^n)(\lambda+1)^{1-n}((n-1)\lambda+1).
\end{eqnarray*}
Note first that $\tilde s(\lambda,\delta,n)=O(1)$ as $\lambda\to 1.$ Now, we show that 
\begin{equation}\label{vegtelen}
\lim_{\lambda\to 1}\int_0^\delta (\lambda-\cos \rho)^{-n}(n\lambda-\cos \rho)\rho^n\sin\rho {\textrm d}\rho=+\infty.
\end{equation}
Since $\cos\rho>1-\rho^2$, $n\lambda-\cos \rho\geq n-1$ and $\sin\rho\geq \frac{2}{\pi}\rho$ for every $\rho\in (0,\delta)$ and $\lambda>1$, it suffices to prove that 
$$
\lim_{\lambda\to 1}\int_0^\delta \frac{\rho^{n+1}}{(\lambda-1+ \rho^2)^{n}}  {\rm d}\rho=+\infty.
$$
In order to check the latter limit, by changes of variables  one has 
\begin{eqnarray*}
\lim_{\lambda\to 1}\left((\lambda-1)^{\frac{n}{2}-1}\int_0^\delta \frac{\rho^{n+1}}{(\lambda-1+ \rho^2)^{n}}  {\rm d}\rho\right)&= & \lim_{\lambda\to 1}\int_0^{\delta/\sqrt{\lambda-1}} \frac{\tau^{n+1}}{(1+ \tau^2)^{n}}  {\rm d}\tau \ \ \ \ \ \ \ \ [\rho=\sqrt{\lambda-1}\tau]\\
&=&\int_0^\infty \frac{\tau^{n+1}}{(1+ \tau^2)^{n}}{\rm d}\tau\\&=& \frac{1}{2}\int_0^1 \theta^\frac{n}{2}(1-\theta)^{\frac{n}{2}-2}{\rm d}\theta\ \ \ \ \ \ \ \ \ \ \  \left[\tau=\sqrt{\frac{\theta}{1-\theta}}\right]\\&=&
\frac{1}{2}{\textsf{ B}}\left(\frac{n}{2}+1,\frac{n}{2}-1\right).
\end{eqnarray*}

\noindent 
\textbf{Step 2} (ODE vs. ODI; global comparison of $f$ and $s$). 
Due
to Aubin \cite{Aubin}, the extremal function in (\ref{AB}) when $(M,g)=(\mathbb{S}^{n},g_0)$
is $u_{\lambda}(x)=(\lambda-\cos d_0)^{1-\frac{n}{2}}$ for every  $\lambda>1$. Thus, inserting $u_\lambda$ into (\ref{AB}) when $(M,g)=(\mathbb{S}^{n},g_0)$ and using the notation in (\ref{ket-fuggveny}), we have the following ODE:
\begin{equation}
\left[\frac{s''(\lambda)}{(n-2)(n-1)}\right]^{\frac{2}{2^{*}}}=\frac{2}{n}\omega_{n}^{-\frac{2}{n}}\left[\frac{1-\lambda^{2}}{2(n-1)}s''(\lambda)-\lambda s'(\lambda)+s(\lambda)\right],\ \lambda>1.\label{diffegyenlet}
\end{equation}

 Let $K_{0}=\frac{2}{n}\omega_{n}^{-\frac{2}{n}}$ and  $C=K_0\max\left\{\frac{A}{A_0},\frac{B}{B_0}\right\}$. Without loss of generality, we may assume that $A>A_0;$ indeed, since $A\geq A_0$, we may take $A=A_0+\varepsilon$ for $\varepsilon>0$ sufficiently small. Since $B\geq {\rm Vol}_{g}(M)^{-\frac{2}{n}} \geq B_0$, it turns out that  $C>K_0.$
By introducing the function
$$H(\lambda)=\left(\frac{K_{0}}{C}\right)^{\frac{n}{2}}J(s)=\left(\frac{K_{0}}{C}\right)^{\frac{n}{2}}(s(\lambda)-\lambda s'(\lambda)),$$
one has $H'(\lambda)=-\lambda\left(\frac{K_{0}}{C}\right)^{\frac{n}{2}}s''(\lambda),$
therefore $s''(\lambda)=-\frac{H'(\lambda)}{\lambda}\left(\frac{K_{0}}{C}\right)^{-\frac{n}{2}}$.
This means that the second order ODE (\ref{diffegyenlet}) is equivalent
to the following first order ODE:
\begin{equation}
\left[-\frac{H'(\lambda)}{\lambda(n-2)(n-1)}\right]^{\frac{2}{2^{*}}}=C\left[\frac{\lambda^{2}-1}{2\lambda(n-1)}H'(\lambda)+H(\lambda)\right], \ \lambda>1.\label{ode}
\end{equation}

Now, if we replace  $w_{\lambda}(x)=(\lambda-\cos d_g)^{1-\frac{n}{2}}$ for every $\lambda>1$ into (\ref{AB}) and we explore the eikonal equation $|\nabla_gd_g|=1$ valid a.e. on $M$, we obtain 
$$\left[\int_M (\lambda-\cos d_g)^{-n}{\rm d}v_g\right]^\frac{2}{2^*}\leq A \int_M (\lambda-\cos d_g)^{-n}\sin^2d_g{\rm d}v_g+B\int_M (\lambda-\cos d_g)^{2-n}{\rm d}v_g.$$
By using  the notation in (\ref{ket-fuggveny}),   the latter inequality can be rewritten into  
\[
\left[\frac{f''(\lambda)}{(n-2)(n-1)}\right]^{\frac{2}{2^{*}}}\leq K_0\left[\frac{A}{A_0}\frac{1-\lambda^{2}}{2(n-1)}f''(\lambda)-\frac{A}{A_0}{\lambda}f'(\lambda)+\left(\frac{2-n}{2}\frac{A}{A_0}+\frac{n}{2}\frac{B}{B_0}\right){f(\lambda)}\right],
\]
for every $\lambda>1.$
Since 
$$\frac{1-\lambda^{2}}{2(n-1)}f''(\lambda)-{\lambda}f'(\lambda)+\frac{2-n}{2}{f(\lambda)}=\frac{n-2}{2}\int_M(\lambda-\cos d_g)^{-n}\sin^2 d_g{\rm d}v_{g}\geq 0,$$
and 
 $C=K_0\max\left\{\frac{A}{A_0},\frac{B}{B_0}\right\}$, the latter inequality implies that
\[
\left[\frac{f''(\lambda)}{(n-2)(n-1)}\right]^{\frac{2}{2^{*}}}\leq C\left[\frac{1-\lambda^{2}}{2(n-1)}f''(\lambda)-{\lambda}f'(\lambda)+{f(\lambda)}\right],\ \ \lambda>1.
\]
Since $I(\lambda)=f(\lambda)-\lambda f'(\lambda)$, 
we get the following first order ordinary differential inequality:
\begin{equation}
\left[-\frac{I'(\lambda)}{\lambda(n-2)(n-1)}\right]^{\frac{2}{2^{*}}}\leq C\left[\frac{\lambda^{2}-1}{2\lambda(n-1)}I'(\lambda)+I(\lambda)\right].\label{odi}
\end{equation}

We claim that 
\begin{equation}\label{heswhasonlitas}
I(\lambda)\geq H(\lambda),\ \forall\lambda>1.
\end{equation}
First of all, by (\ref{local-hatarertek}) we clearly have  that 
\[
\liminf_{\lambda\to1^{+}}\frac{I(\lambda)}{H(\lambda)}=\liminf_{\lambda\to1^{+}}\frac{f(\lambda)-\lambda f'(\lambda)}{\left(\frac{K_{0}}{C}\right)^{\frac{n}{2}}(s(\lambda)-\lambda s'(\lambda))}\geq \left(\frac{C}{K_{0}}\right)^{\frac{n}{2}}>1.
\]
Thus, for sufficiently small $\delta_{0}>0$ one has 
\[
I(\lambda)\geq H(\lambda),\ \forall\lambda\in(1,\delta_{0}+1).
\]
 Assume by contradiction that $I(\lambda_{0})<H(\lambda_{0})$ for some $\lambda_{0}>1$.
Clearly, $\lambda_{0}>1+\delta_{0}.$ Let us define 
\[
\lambda_{s}:=\sup\{\lambda<\lambda_{0}:I(\lambda)=H(\lambda)\}<\lambda_{0}.
\]
Thus for any $\lambda\in[\lambda_{s},\lambda_{0}]$ we have $I(\lambda)\leq H(\lambda).$
It is also clear that 
\[
-\frac{I'(\lambda)}{\lambda(n-2)(n-1)}=\frac{f''(\lambda)}{(n-2)(n-1)}>0\]
 and
\[-\frac{H'(\lambda)}{\lambda(n-2)(n-1)}=\frac{s''(\lambda)}{(n-2)(n-1)}>0.
\]
Let us define the increasing function $\varphi_{\lambda}:(0,\infty)\to\mathbb{R}$
by 
\[
\varphi_{\lambda}(t)=t^\frac{2}{2^{*}}+\frac{(n-2)}{2}C(\lambda^{2}-1)t.
\]
By relations (\ref{ode}), (\ref{odi}) and the definition of  $\varphi_{\lambda}$, for every  $\lambda\in[\lambda_{s},\lambda_{0}]$ 
 we have that
\begin{align*}
\varphi_{\lambda}\left(-\frac{I'(\lambda)}{\lambda(n-2)(n-1)}\right) & =\left(-\frac{I'(\lambda)}{\lambda(n-2)(n-1)}\right)^{\frac{2}{2^{*}}}+\frac{(n-2)}{2}C(\lambda^{2}-1)\left(-\frac{I'(\lambda)}{\lambda(n-2)(n-1)}\right)\\
 & \leq CI(\lambda)\\
 & \leq CH(\lambda)\\
 & =\varphi_{\lambda}\left(-\frac{H'(\lambda)}{\lambda(n-2)(n-1)}\right).
\end{align*}
 Therefore, the  monotonicity of $\varphi_\lambda$ implies 
\[
I'(\lambda)\geq H'(\lambda),\ \forall\lambda\in[\lambda_{s},\lambda_{0}].
\]
In particular $\lambda\mapsto I(\lambda)-H(\lambda)$ is non-decreasing
on the interval $[\lambda_{s},\lambda_{0}].$ Consequently, we have
\[
0=I(\lambda_{s})-H(\lambda_{s})\leq I(\lambda_{0})-H(\lambda_{0})<0,
\]
  a contradiction, which shows the validity of (\ref{heswhasonlitas}).  
 
\noindent  {\bf Step 3} (proving (\ref{foegyenlotlenseg})).  
 Due to (\ref{Bishop-Gromov}), the claim is concluded once we prove
 \begin{equation}\label{vegso-egyenlotlenseg}
 \frac{{\rm Vol}_g(M)}{{\rm Vol}_{g_0}(\mathbb S^n)}\geq \min\left\{\frac{A_0}{A},\frac{B_0}{B}\right\}^\frac{n}{2}.
  \end{equation}
Note that relation  (\ref{heswhasonlitas}) is equivalent to
$$(n-2)\int_0^{D_M}{\rm Vol}_g(B_M({x_0},\rho))\frac{n\lambda-\cos \rho}{(\lambda-\cos \rho)^{n}}\sin \rho {\rm d}\rho + {\rm Vol}_g(M)\frac{(n-1)\lambda-\cos D_M}{(\lambda-\cos D_M)^{n-1}}$$
$$\geq \left(\frac{K_{0}}{C}\right)^{\frac{n}{2}} \left[(n-2)\int_0^{\pi}{\rm Vol}_{g_0}(B_{\mathbb S^n}({y_0},\rho)) \frac{n\lambda-\cos \rho}{(\lambda-\cos \rho)^{n}}\sin \rho {\rm d}\rho + {\rm Vol}_{g_0}(\mathbb S^n)\frac{(n-1)\lambda+1}{(\lambda+1)^{n-1}}\right], $$
for every $ \lambda>1.$

Let us multiply the above inequality by $\lambda^{n-2}$ and take the limit when $\lambda \to \infty$; the Lebesgue dominance theorem implies that both integrals tend to $0$, remaining 
$${\rm Vol}_g(M)\geq \left(\frac{K_{0}}{C}\right)^{\frac{n}{2}}{\rm Vol}_{g_0}(\mathbb S^n).$$
Since $C=K_0\max\left\{\frac{A}{A_0},\frac{B}{B_0}\right\}$, the latter relation implies  (\ref{vegso-egyenlotlenseg}) at once, which concludes the proof of (\ref{foegyenlotlenseg}).  
\end{proof}

%
%
%

 \begin{proof}[Proof of Corollary \ref{kovetkezmeny}]
  Since 	$\max\left\{\frac{A}{A_0},\frac{B}{B_0}\right\}\leq (1-\delta_n)^{-\frac{2}{n}},$
 by the quantitative volume estimate (\ref{foegyenlotlenseg}) it follows that 
 $${\rm Vol}_g(M)\geq (1-\delta_n){\rm Vol}_{g_0}(\mathbb S^n).$$
 The statement follows by  Cheeger and Colding \cite{CC-97}.  
 
 If $(M,g)$ is isometric to $(\mathbb{S}^{n},g_0)$, it is clear that   $A=A_0$ and $B=B_0,$ due to Aubin \cite{Aubin}. Conversely, when $A=A_0$ and $B=B_0,$ we apply (\ref{foegyenlotlenseg}) and (\ref{Bishop-Gromov}) in order to obtain ${\rm Vol}_{g_0}(B_{\mathbb S^n}(y_0,\rho))= {\rm Vol}_g(B_M(x_0,\rho))$ for every $\rho\in [0,\pi]$ (in fact, for every $\rho\in [0,\infty))$. Now, the equality in the Bishop-Gromov comparison principle implies that $(M,g)$ is isometric to $(\mathbb{S}^{n},g_0)$. \end{proof}
 
 \vspace{0.5cm}
 
 \textbf{Acknowledgment.}  {The authors are supported by the National Research, Development and Innovation Fund of Hungary, financed under the K$\_$18 funding scheme, Project No.  127926. A. Krist\'aly is also supported by the STAR-UBB grant. }

\end{document}